\definecolor{darkred}{rgb}{0.5,0,0}
\definecolor{darkgreen}{rgb}{0,0.5,0}
\definecolor{darkblue}{rgb}{0,0,0.5}
\newtheorem{theorem}{Theorem}[section]
\newtheorem{claim}{Claim}
\newtheorem{proposition}[theorem]{Proposition}
\theoremstyle{remark}
\renewcommand{\le}{\leqslant}
\renewcommand{\ge}{\geqslant}
\newcommand{\bfk}{\mathbf{k}}
\newcommand{\bftwo}{\mathbf{2}}
\newcommand{\cgB}{\mathcal{B}}
\newcommand{\cgL}{\mathcal{L}}
\newcommand{\cgM}{\mathcal{M}}
\newcommand{\cgR}{\mathcal{R}}
\newcommand{\Inc}{\operatorname{Inc}}
\newcommand{\Max}{\operatorname{Max}}
\newcommand{\Min}{\operatorname{Min}}
\newcommand{\ldim}{\operatorname{ldim}}
\newcommand{\bdim}{\operatorname{bdim}}
\newcommand{\ple}{\operatorname{ple}}
\newcommand{\PRam}{\operatorname{PRam}}
\newcommand{\Kelly}{\operatorname{K}}
\newcommand{\Core}{\operatorname{Core}}
\begin{document}

\title[PLANAR POSETS AND LOCAL DIMENSION]{Local Dimension is Unbounded for\\
 Planar Posets}

\author[B. BOSEK]{Bart\l omiej Bosek$^1$}
\thanks{$^1$ Supported by Polish National Science Center grant 2013/11/D/ST6/03100.}
\address{B.{ }Bosek: Theoretical Computer Science Department\\
 Faculty of Mathematics and Computer Science\\
 Jagiellonian University in Krak\'ow\\
 Krak\'ow, Poland}
\email{bosek@tcs.uj.edu.pl}

\author[J. GRYTCZUK]{Jaros{\l}aw Grytczuk$^2$}
\thanks{$^2$ Supported by Polish National Science Center grant 2015/17/B/ST1/02660.}
\address{J.{ }Grytczuk: Faculty of Mathematics and Information Science\\
 Warsaw University of Technology\\
 Warszawa, Poland}
\email{j.grytczuk@mini.pw.edu.pl}

\author[W.T. TROTTER]{William T. Trotter$^3$}
\thanks{$^3$ Supported by a Simons Foundation Collaborative Research Grant.}
\address{W.T.{ }Trotter: School of Mathematics\\
 Georgia Institute of Technology\\
 Atlanta, Georgia 30332}
\email{trotter@math.gatech.edu}


\subjclass[2010]{06A07, 05C35}



\keywords{Boolean dimension, local dimension, planar poset}

\begin{abstract}
In 1981, Kelly showed that planar posets can have arbitrarily large
dimension. However, the posets in Kelly's example have bounded
Boolean dimension and bounded local dimension, leading naturally
to the questions as to whether either Boolean dimension or local
dimension is bounded for the class of planar posets. The question
for Boolean dimension was first posed by Ne\v{s}et\v{r}il and 
Pudl\'{a}k in 1989 and remains unanswered today. The concept of 
local dimension is quite new, introduced in 2016 by Ueckerdt. Since that time, 
researchers have obtained many interesting results concerning
Boolean dimension and local dimension, contrasting these
parameters with the classic Dushnik-Miller concept of dimension,
and establishing links between both parameters
and structural graph theory, path-width and tree-width
in particular.

Here we show that the local dimension
is not bounded in the class of planar posets. Our proof also
shows that the local dimension of a poset is not bounded in terms
of the maximum local dimension of its blocks, and it 
provides an alternative proof of the fact that the local dimension 
of a poset cannot be bounded in terms of the tree-width of 
its cover graph, independent of its height. 
\end{abstract} 

\maketitle

\section{Notation, Terminology and Background Discussion}

In this paper, we investigate combinatorial problems for
finite posets. As has become standard in the literature, we
use the terms \textit{elements} and \textit{points} interchangeably
in referring to the members of the ground set of a poset. 
We will write $x\parallel y$ in $P$ when $x$ and $y$ are incomparable
in a poset $P$, and we let $\Inc(P)$ denote the set of all ordered pairs 
$(x,y)$ with $x\parallel y$ in $P$. As a binary 
relation, $\Inc(P)$ is symmetric. Recall that
a non-empty family $\cgR$ of linear extensions
of $P$ is called a \textit{realizer} of $P$ when 
$x<y$ in $P$ if and only if $x<y$ in $L$ for each $L\in\cgR$.
Clearly, a non-empty family $\cgR$ of linear extensions of $P$ is a 
realizer of $P$ if and only if for each $(x,y)\in
\Inc(P)$, there is some $L\in\cgR$ for which $x>y$ in $L$.
The \textit{dimension} of a poset $P$, as defined by Dushnik
and Miller in their seminal paper~\cite{bib:DusMil41}, is the least
positive integer $d$ for which $P$ has a realizer $\cgR$ with
$|\cgR|=d$.

In recent years, researchers have been investigating combinatorial
problems for two variations of the Dushnik-Miller concept for dimension,
known as \textit{Boolean dimension} and \textit{local dimension},
respectively.

Here is the setting  for Boolean dimension.
For a positive integer $d$, we let $\bftwo^d$ denote the set of all
$0$--$1$ strings of length~$d$. Such strings are
also called \textit{bit strings}. Let $P$ be a poset and
let $\cgB=\{L_1,L_2,\dots,L_d\}$ be a family of linear orders on the
ground set of $P$ (these linear orders need not be linear extensions of $P$).
Also, let $\tau$ be a function which maps all $0$--$1$ strings of length~$d$
to $\{0,1\}$. For each pair $(x,y)$ of distinct elements of $P$,
we form the bit string $q(x,y,\cgB)$ which has value $1$ in coordinate $i$
if and only if $x<y$ in $L_i$. We call the pair $(\cgB,\tau)$ a
\textit{Boolean realizer} of $P$ if for every pair $(x,y)$ of distinct
elements of $P$, $x<y$ in $P$ if and only if $\tau(q(x,y,\cgB))=1$.
Ne{\v{s}}et{\v{r}}il and Pudl{\'a}k~\cite{bib:NesPud89} 
defined the \textit{Boolean dimension of $P$}, 
denoted $\bdim(P)$, as the least positive integer $d$ for which 
$P$ has a Boolean realizer $(\cgB,\tau)$ with
$|\cgB|=d$. Clearly, $\bdim(P)\le\dim(P)$, since if $\cgR=\{L_1,L_2,\dots,
L_d\}$ is a realizer of $P$, we simply take $\cgB=\cgR$ and $\tau$ as the function which
maps $(1,1,\dots,1)$ to $1$ while all other bit strings of length~$d$
are mapped to~$0$.

Trivially, $\bdim(P)=1$ if and only if $P$ is either a chain or an antichain.
We say that a poset $Q$ is a \emph{subposet} of $P$ if for any two elements $x,y$ in $Q$ 
we have $x\leqslant y$ in $Q$ if and only if $x\leqslant y$ in $P$.
So, if $Q$ is a subposet of $P$, then $\bdim(Q)\le\bdim(P)$.
In this paper, we will denote the \textit{dual} of poset $P$ as $P^*$, and we 
define it as follows: the set of vertices of $P$ and $P^*$ are the same and 
$x \leqslant y$ in $P^*$ if and only if $y \leqslant x$ in $P^*$.
Clearly, $\bdim(P)=\bdim(P^*)$.
It is an easy exercise to show that if $\bdim(P)=2$, then $\dim(P)=2$.
In~\cite{bib:TroWal17}, Trotter and Walczak prove the modestly more 
challenging fact that if $\bdim(P)=3$, then $\dim(P)=3$.
As we will see shortly, for every $d\ge4$, there is a poset $P$ with 
$\bdim(P)=4$ and $\dim(P)=d$.

Here is the setting for local dimension. Let $P$ be a poset. A \textit{partial linear extension}, abbreviated $\ple$, 
of $P$ is a linear extension of a subposet of $P$.
Whenever $\cgL$ is a family of $\ple$'s of $P$ and $u\in P$, we set 
$\mu(u,\cgL)=|\{L\in\cgL:u\in L\}|$.
In turn, we set $\mu(P,\cgL)=\max\{\mu(u,\cgL):u\in P\}$.
A non-empty family $\cgL$ of $\ple$'s of a poset $P$ is called a \textit{local 
realizer} of $P$ if the following two conditions are satisfied: (1)~If $x<y$ in $P$, there is some $L\in\cgL$ for which $x<y$ in $L$; (2)~if $(x,y) \in \Inc(P)$, there is some $L\in\cgL$ for which $x>y$ in $L$.
The \textit{local dimension of $P$}, denoted $\ldim(P)$, is then 
defined to be the least positive integer $d$ for which $P$ has a local realizer $\cgL$ with $\mu(P,\cgL)=d$.

The concept of local dimension is due to Torsten Ueckerdt who shared his ideas with participants of the workshop on \textit{Order and Geometry} (held in Gu{\l}towy, Poland, September 14--17, 2016). Clearly, this new notion resonated with participants at the workshop, and it served to stimulate renewed interest in Boolean dimension as well.

Trivially, $\ldim(P)\le\dim(P)$ for all posets $P$. Also, $\ldim(P)=1$ if and 
only if $P$ is a chain; $\ldim(Q)\le\ldim(P)$ if $Q$ is a subposet of $P$; and 
if $P^*$ is the dual of $P$, then $\ldim(P^*)=\ldim(P)$.
It is an easy exercise to show that if $\ldim(P)=2$, then $\dim(P)=2$.
However, for every $d\ge3$, there is a poset $P$ with $\ldim(P)=3$ and 
$\dim(P)=d$.

The principal result of this paper involves a construction for a family
of posets for which local dimension is unbounded.  The implications
of our construction fall into four distinct categories:

\subsection{Planar Posets}
For two elements $x,y$ of a poset $P$ we say that $y$ \emph{covers} $x$ if $x<y$ 
and there is no element $z$ with $x<z<y$.
The \emph{cover graph} of~$P$ is the undirected graph $G(P)$ whose set of vertices is the same as the set of elements of~$P$, in which $x$ is adjacent to $y$ if and only if $x$ covers $y$ or $y$ covers $x$ in~$P$.
A drawing on the plane of the cover graph of a poset $P$ is called an \emph{order 
diagram} if for any two comparable elements $x < y$ in $P$, the point in 
the plane corresponding to the element $y$ is higher than the point 
corresponding to the element $x$.
A poset $P$ is \textit{planar} if its order diagram can be drawn in the
plane without edge crossings. If a poset is planar, then its cover graph
is planar, although the converse does not hold in general.

For an integer $n\ge2$, the \textit{standard example} $S_n$ is
a height~$2$ poset with minimal elements $A=\{a_1,a_2,\dots,a_n\}$ and
maximal elements $B=\{b_1,b_2,\dots,b_n\}$. Furthermore, $a_i<b_j$ in $S_n$
if and only if $i\neq j$ (see Figure \ref{fig:StandardExample}).
\begin{figure}[hbt]%
\begin{center}%
\tikzstyle{vertex}=[circle, line width=0.3mm, draw, fill=white, inner sep=0pt, minimum width=2.2mm]%
\tikzstyle{edge}=[draw,line width=0.3mm,-]%
\begin{tikzpicture}[scale=1]
\def\n{6} 
\foreach \i in {1,...,\n} {
 \coordinate (a\i) at ($(\i,0)$);
 \coordinate (b\i) at ($(\i,1.5)$); }
\foreach \i in {1,...,\n} {
 \foreach \j in {1,...,\n} {
 \ifthenelse {\i=\j} {} {
 \path[edge] (a\i) -- (b\j); } } }
\foreach \i in {1,...,\n} {
 \draw (a\i) node[vertex] {} node[below=0.15] {$a_{\i}$};
 \draw (b\i) node[vertex] {} node[above=0.15] {$b_{\i}$}; }
\end{tikzpicture}%
\end{center}%
\caption{The Standard Example}%
\label{fig:StandardExample}%
\end{figure}
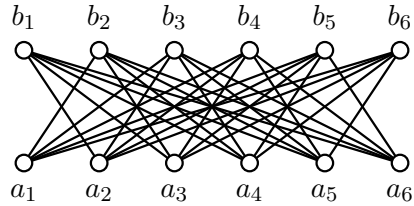%
Dushnik and Miller \cite{bib:DusMil41} showed that $\dim(S_n)=n$, for all 
$n\ge2$ (see \cite{bib:Trot-Book92}). Furthermore, it is an easy exercise to show that $S_n$ is planar 
when $2\le n\le 4$ and non-planar when $n\ge5$.

In Figure~\ref{fig:Kelly},
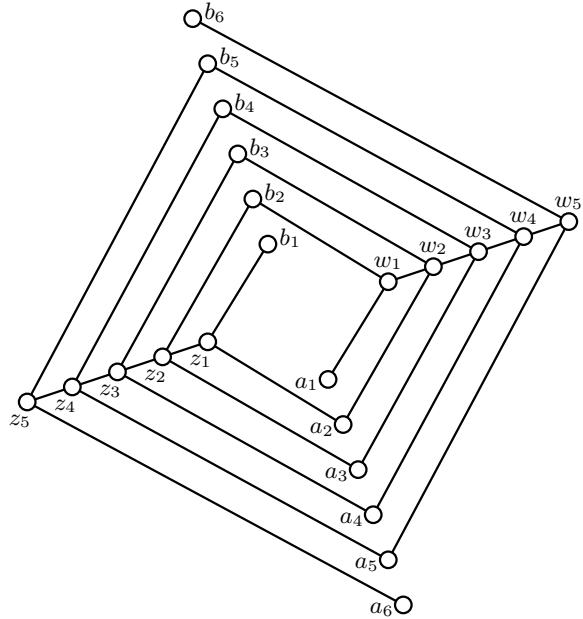
\begin{figure}[hbt]%
\begin{center}%
\tikzstyle{vertex}=[circle, line width=0.3mm, draw, fill=white, inner sep=0pt, minimum width=2.2mm]%
\tikzstyle{edge}=[draw,line width=0.3mm,-]%
\begin{footnotesize}%
\begin{tikzpicture}[scale=1]
\def\n{6} 
\pgfmathparse{\n-1}
\edef\m{\pgfmathresult}
\pgfmathparse{\n-2}
\edef\k{\pgfmathresult}
\foreach \i in {1,...,\n} {
 \coordinate (a\i) at ($(\i*0.2+0.2,-\i*0.6-0.3)$);
 \coordinate (b\i) at ($(-\i*0.2-0.2,\i*0.6+0.3)$); }
\foreach \i in {1,...,\m} {
 \coordinate (w\i) at ($(\i*0.6+0.6,\i*0.2+0.2)$);
 \coordinate (z\i) at ($(-\i*0.6-0.6,-\i*0.2-0.2)$); }
\foreach \i in {1,...,\m} {
 \pgfmathparse{\i+1}
 \edef\j{\pgfmathresult}
 \path[edge] (a\i) -- (w\i);
 \path[edge] (w\i) -- (b\j);
 \path[edge] (b\i) -- (z\i);
 \path[edge] (z\i) -- (a\j); }
\foreach \i in {1,...,\k} {
 \pgfmathparse{\i+1}
 \edef\j{\pgfmathresult}
 \path[edge] (w\i) -- (w\j);
 \path[edge] (z\i) -- (z\j); }
\foreach \i in {1,...,\n} {
 \draw (a\i) node[vertex] {} node[below left=-0.14 and 0.01] {$a_{\i}$};
 \draw (b\i) node[vertex] {} node[above right=-0.16 and 0.04] {$b_{\i}$}; }
\foreach \i in {1,...,\m} {
 \draw (w\i) node[vertex] {} node[above=0.05] {$w_{\i}$}; 
 \draw (z\i) node[vertex] {} node[below left=0.05 and -0.17] {$z_{\i}$}; }
\end{tikzpicture}%
\end{footnotesize}%
\end{center}%
\caption{The Kelly Construction}%
\label{fig:Kelly}%
\end{figure}%
we present a construction due to Kelly~\cite{bib:Kell81},
showing that for all $n\ge5$, the non-planar poset $S_n$ is a subposet
of a planar poset $K_n$. This specific figure is a diagram for
$K_6$, but it should be clear how we intend the diagram to be
depicted for other values of $n$.

 The following argument shows that the bound $\bdim(K_n)\le4$ holds for all $n\ge3$.
First, notice that $\dim(K_3)=3$ implies $\bdim(K_3)=3$. So, assume that $n\ge4$.
Then for $K_n$ we form linear orders $L_1$:
\[
a_1<w_1<a_2<\dots<w_{n-1}<a_n \ \ < \ \ b_n<z_{n-1}<b_{n-1}<\dots<z_1<b_1
\]
and $L_2$:
\[
a_n<z_{n-1}<a_{n-1}<\dots<z_1<a_1 \ \ < \ \ b_1<w_1<b_2<\ldots<w_{n-1}<b_n.
\]
Note that $L_1$ and $L_2$ are linear extensions of $K_n$.
Then form linear orders (they are \textit{not} linear extensions)
$L_3$:
\[
a_1 \! < \! b_1 \ < \ a_2 \! < \! b_2 \ < \ \ldots \ < \ a_n \! < \! b_n \ < \ 
w_1 \! < \! \ldots \! < \! w_{n-1} \! < \! z_1 \! < \! \ldots \! < \! z_{n-1}
\]
and $L_4$:
\[
z_{n-1} \! < \! \ldots \! < \! z_1 \! < \! w_{n-1} \! < \! \ldots \! < \! w_1 \ < \ 
a_n \! < \! b_n \ < \ a_{n-1} \! < \! b_{n-1} \ < \ \ldots \ < \ a_1 \! < \! b_1.
\]
Then set $\cgB=\{L_1,L_2,L_3,L_4\}$, and let $\tau:\mathbf{2}^4
\rightarrow\{0,1\}$ be defined by setting
$\tau(1,1,0,1)=\tau(1,1,1,0)=1$. The map $\tau$ sends all other
strings to $0$. It is easy to check that $(\cgB,\tau)$ is a Boolean realizer for
$K_n$.

In~\cite{bib:NesPud89}, Ne\v{s}et\v{r}il and Pudl\'{a}k remarked that
the posets in the Kelly construction have Boolean dimension at most~$4$,
and they asked if the Boolean dimension of planar
posets is bounded. It is clear from their presentation that they
believed the answer should be ``yes''.  However, this challenging
question has remained open for nearly $30$ years.

For local dimension, Ueckerdt~\cite{bib:Ueck16} noted that
$\ldim(S_n)\le 3$ for all $n\ge2$.  In fact, $\ldim(K_2)=2$ and
$\ldim(K_n)=3$ for all $n\ge3$. Here's why.
Suppose $n\ge3$.
Let $L_1$ and $L_2$ be the linear extensions of $K_n$ defined
just above. Then for each $i=1,2,\dots,n$, let $M_i$ be the
$\ple$ whose ground set is $\{a_i,b_i\}$ with $a_i>b_i$ in $M_i$.
Clearly, $\cgL=\{L_1,L_2\}\cup\{M_i:1\le i\le n\}$ is a local
realizer for $K_n$ and $\mu(z,\cgL)\le 3$ for all $z\in K_n$.

In view of these observations, it is also natural to ask whether
local dimension is bounded for the class of planar posets. Our
construction will show that the answer is ``no''.

\subsection{Components and Blocks}

We refer the reader to \cite{bib:D18} for the concepts of
graph theory, including the following terms: connected and disconnected
graphs; components; cut vertices; and $k$-connected graphs for an
integer $k\ge2$.
A block is an inclusion-maximal $2$-connected subgraph.

Here are the analogous concepts for posets.  A poset $P$ is said to
be \emph{connected} if its cover graph is connected.  A subposet $B$ of $P$ is
said to be \emph{convex} if $y\in B$ whenever $x,z\in B$ and
$x<y<z$ in $P$.   Note that when $B$ is a convex subposet of $P$, the
cover graph of $B$ is an induced subgraph of the cover graph of $P$.
A convex subposet $B$ of $P$ is called a \emph{component} of $P$ when the
cover graph of $B$ is a component of the cover graph of $P$.
A convex subposet $B$ of $P$ is called a \emph{block} of $P$, when
the cover graph of $B$ is a block in the cover graph of $P$.

When $P$ is a disconnected poset with components $C_1, C_2, 
\dots,C_t$, for some $t\ge2$, then
$\dim(P) = \max \{ 2, \max \{ \dim(C_i) : 1 \le i \le t\} \}$.
Readers may note that the preceding observation is just a special case of the 
formula for the dimension of a \emph{lexicographic sum} (see  page~23 
in~\cite{bib:Trot-Book92}).
For the local dimension, it is an easy exercise to show that
$\ldim(P)\le 2+\max\{\ldim(C_i):1\le i\le t\}$, but we do not know
whether this inequality is best possible.

The corresponding result for Boolean dimension is more complicated
and is due to
M\'{e}sz\'{a}ros, Micek and Trotter~\cite{bib:MeMiTr17}.

\begin{theorem}\label{thm:Bdim-components}
Let $P$ be a disconnected poset with components
$C_1,C_2,\dots,C_t$, for some $t\ge2$.  If $d=\max\{\bdim(C_i):
1\le i\le t\}$, then $\bdim(P)=O(2^d)$.
\end{theorem}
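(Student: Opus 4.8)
The plan is to assemble a single Boolean realizer of $P$ out of three groups of linear orders: a group of size $2$ that detects whether two points lie in the same component, a group of size $d$ that faithfully reproduces the component realizers, and a group of size $2^d$ — one linear order for each possible bit string — that stores the correct answer for that string. Throughout I would fix, for each component $C_i$, a Boolean realizer $(\cgB_i,\tau_i)$ with $\cgB_i=\{L_1^i,\dots,L_d^i\}$, padding with dummy linear orders so that every component uses exactly $d$ of them, and I would write $T_i=\tau_i^{-1}(1)\subseteq\bftwo^d$ for its accepting set. Since cross-component pairs are exactly the pairs I must force $\tau$ to reject, the detector group is what lets me do this uniformly over arbitrarily many components.

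First I would handle detection. Fix an arbitrary ordering $C_1,\dots,C_t$ of the components and an arbitrary linear order $\pi_i$ on the ground set of each $C_i$. Let $D_1$ list the components as $C_1<C_2<\dots<C_t$ and let $D_2$ list them in the reverse order $C_t<\dots<C_1$, both using the internal order $\pi_i$ inside each $C_i$. For $x\in C_i$ and $y\in C_j$, the two bits of $(x,y)$ in $(D_1,D_2)$ agree precisely when $i=j$ and disagree precisely when $i\neq j$; this is the mechanism by which $\tau$ will output $0$ on every cross-component, hence incomparable, pair. Next I would reproduce the component realizers: let $G_k$ (for $1\le k\le d$) be any linear order on $P$ with $G_k|_{C_i}=L_k^i$ for all $i$, so that for a within-component pair $(x,y)$ in $C_i$ one has $q(x,y,\{G_1,\dots,G_d\})=q(x,y,\cgB_i)$.

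The heart of the argument is the third group. For each string $s\in\bftwo^d$ I would build a single ``indicator'' linear order $H_s$ on $P$ whose bit on a within-$C_i$ pair $(x,y)$ with $q(x,y,\cgB_i)=s$ equals $[\,s\in T_i\,]=[\,x<y\text{ in }C_i\,]$. The key step, and the point I expect to be the main obstacle, is to show such an $H_s$ exists. The observation that makes it work is that the fiber relation $R_s^{(i)}=\{(x,y):q(x,y,\cgB_i)=s\}$ is acyclic: picking any coordinate $k$, the relation $R_s^{(i)}$ is contained in the linear order $L_k^i$ when $s_k=1$ and in its reverse when $s_k=0$, so it admits no cycle. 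Hence $R_s^{(i)}$ extends to a linear order $\sigma_i$ of $C_i$, and I would set $H_s|_{C_i}=\sigma_i$ when $s\in T_i$ and $H_s|_{C_i}=\overline{\sigma_i}$ (the reverse) when $s\notin T_i$, which yields exactly the desired bit. The usual difficulty that a coordinate flips when the pair is reversed does not interfere here, because the reverse pair $(y,x)$ has base string $\overline{s}\neq s$, so $\tau$ will never consult $H_s$ on it.

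Finally I would define the global function: on a full bit vector, $\tau$ returns $0$ if the $D_1,D_2$ bits disagree, and otherwise reads the string $s$ off the $G_k$-coordinates and returns the $H_s$-coordinate. This is well defined, and by the preceding paragraphs it equals $[x<y]$ on within-component comparable pairs, on within-component incomparable pairs (where $s\notin T_i$ forces the value $0$), and on cross-component pairs (killed by the detector). The realizer uses $2+d+2^d$ linear orders, which is $O(2^d)$. The only part requiring real care is the bookkeeping that verifies $\tau$ is simultaneously correct on all three pair types; everything else reduces to the acyclicity of the fibers $R_s^{(i)}$.
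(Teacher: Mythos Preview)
The paper does not contain a proof of this theorem; it is quoted as a result of M\'{e}sz\'{a}ros, Micek and Trotter~\cite{bib:MeMiTr17} and is used only as background. There is therefore no in-paper argument to compare your proposal against.

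That said, your argument is sound. The three ingredients do exactly what is needed: the pair $(D_1,D_2)$ detects same-versus-different component via agreeing/disagreeing bits; the block $G_1,\dots,G_d$ reproduces $q(x,y,\cgB_i)$ on within-component pairs; and the crucial step is the construction of the $H_s$. Your acyclicity observation is correct --- for any fixed $s$ the fiber $R_s^{(i)}$ is contained in $L_k^i$ or its reverse for each $k$, hence is a suborder of a linear order and extends to some $\sigma_i^{(s)}$ on $C_i$. Setting $H_s|_{C_i}$ to be $\sigma_i^{(s)}$ or its reverse according to whether $s\in T_i$ then forces the $H_s$-bit on any pair $(x,y)$ in $C_i$ with $q(x,y,\cgB_i)=s$ to equal $\tau_i(s)=[x<y\text{ in }C_i]$, which is exactly what the decoding rule needs. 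The final count $2+d+2^d$ is $O(2^d)$.

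Two small points worth tightening in a write-up: the linear order $\sigma_i$ depends on $s$, so write $\sigma_i^{(s)}$; and when padding a component with $\bdim(C_i)<d$ up to $d$ linear orders, say explicitly that $\tau_i$ is extended to ignore the padded coordinates (so $T_i$ becomes a cylinder set in $\bftwo^d$). Neither affects correctness.
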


The inequality in Theorem~\ref{thm:Bdim-components}
cannot be improved dramatically, since it is shown in~\cite{bib:MeMiTr17}
that for large $d$, there is a disconnected poset $P$ with
$\bdim(P)=\Omega(2^d/d)$ and $\bdim(C)\le d$ for every
component $C$ of $P$.

The situation with blocks is more
complex, even for Dushnik-Miller dimension.
In~\cite{bib:TrWaWa17}, Trotter, Walczak and Wang prove the
following result for Dushnik-Miller dimension.

\begin{theorem}\label{thm:dim-blocks}
If $d\ge1$ and $\dim(B)\le d$ for every block of
a poset $P$, then $\dim(P)\le d+2$.
Furthermore, this inequality is best possible.
\end{theorem}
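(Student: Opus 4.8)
The plan is to prove the upper bound first and then exhibit an extremal family. Since the dimension of a disconnected poset equals $\max\{2,\max_i\dim(C_i)\}$ over its components $C_i$, and since $d+2\ge 3>2$, it suffices to bound $\dim(P)$ when $P$ is connected, so I would assume throughout that the cover graph $G$ of $P$ is connected. Root the block--cut tree $T$ of $G$ at an arbitrary block, and for each non-root block $B$ let $a_B$ be its parent cut vertex, i.e.\ the unique vertex of $B$ closest to the root. The structural fact driving everything is a \emph{factorization lemma}: if a cut vertex $c$ separates $x$ from $y$ in $G$, then a saturated chain witnessing a comparability between $x$ and $y$ is a path in $G$ through $c$, so $x<y$ in $P$ forces $x\le c\le y$ and $x>y$ forces $x\ge c\ge y$. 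Thus every comparability between elements of different blocks is routed through the cut vertices on the connecting tree path, the cross-block relations are acyclic, and they mirror the tree.

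Next I would fix, for each block $B$, a realizer $\{L_B^1,\dots,L_B^d\}$ of size exactly $d$ (padding with repeated extensions when $\dim(B)<d$; single-edge and single-vertex blocks are trivial). Because the cross-block relations propagate only through cut vertices and are acyclic, the $k$-th extensions of all blocks can be glued, along one fixed depth-first traversal of $T$, into a single linear extension $L^k$ of $P$ whose restriction to each block $B$ is exactly $L_B^k$. Since for $x,y$ in a common block their order in $L^k$ is their order in $B$, the family $L^1,\dots,L^d$ reverses every incomparable pair lying inside some block. For the remaining, cross-block incomparable pairs $(x,y)$, the factorization lemma shows that such a pair can only occur when $x$ and $y$ lie on the \emph{same side} of the cut vertex (or vertices) through which their relation is routed; and because $L^1,\dots,L^d$ share the same block-order skeleton, they all impose one fixed orientation on each such pair and therefore cannot reverse it in both directions.

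To handle the residual cross-block pairs I would add exactly two \emph{directional} extensions $L^{+}$ and $L^{-}$, obtained from two depth-first traversals of $T$ that visit the children of each node in opposite orders, so that on any two branches hanging from a common cut vertex the two extensions disagree. The crux of the argument---and the step I expect to be the main obstacle---is to verify that $\{L^1,\dots,L^d,L^{+},L^{-}\}$ reverses \emph{every} incomparable pair in both directions while each member remains a genuine linear extension; the delicate point is the entanglement of the two directional extensions with the $d$ glued ones, since (as the case $d=1$ already shows) the directional extensions alone do not suffice. Indeed, $d=1$ is exactly the classical statement that a poset whose cover graph is a forest has dimension at most $3=1+2$: the blocks are edges carrying no internal incomparabilities, and the two directional extensions together with the single glued extension form the realizer. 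The general case is the uniform, ``tree-like'' promotion of this phenomenon, and assembling the pieces yields a realizer of size $d+2$.

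Finally, for the ``best possible'' claim I would construct, for each $d\ge 1$, a connected poset $P_d$ whose every block has dimension at most $d$ while $\dim(P_d)=d+2$. For $d=1$ this is a tree poset of dimension $3$. For general $d$ the idea is to take a single $2$-connected block of dimension $d$ and attach tree gadgets at two of its vertices, creating two independent families of same-side incomparabilities that cannot be reversed from within the block and hence force two extensions beyond the $d$ needed for the block itself. The matching lower bound $\dim(P_d)\ge d+2$ would then be certified by exhibiting an explicit set of critical pairs (equivalently, an alternating cycle) that no family of $d+1$ linear extensions can reverse. Pinning down this extremal family and verifying the lower bound is the most delicate part of the ``best possible'' half, and I would carry it out separately from the upper-bound argument above.
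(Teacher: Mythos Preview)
This theorem is not proved in the present paper; it is merely quoted from Trotter, Walczak and Wang~\cite{bib:TrWaWa17} as background for the discussion of blocks. There is therefore no proof in this paper against which to compare your attempt.

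On the merits of your proposal itself: the block--cut tree outline for the upper bound is the natural strategy, and your factorization lemma (comparabilities across a cut vertex must pass through it) is correct. However, you explicitly defer the step you yourself flag as ``the main obstacle,'' namely verifying that the two directional extensions $L^{+},L^{-}$ together with the $d$ glued extensions reverse \emph{every} cross-block incomparable pair while remaining linear extensions. That check is not routine---pairs $(x,y)$ in which one of $x,y$ is incomparable to the separating cut vertex, or in which several cut vertices lie on the connecting path, need a careful case analysis---so what you have written is a plan rather than a proof. The sharpness half is weaker still: ``take a $2$-connected block of dimension $d$ and attach tree gadgets at two of its vertices'' does not specify a poset, and you offer no argument that $d+1$ extensions cannot suffice. (The title of~\cite{bib:TrWaWa17}, ``Dimension and cut vertices: An application of Ramsey theory,'' suggests that the sharpness argument there may proceed along rather different lines from the direct construction you envisage.)
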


Boolean dimension behaves somewhat like Dushnik-Miller 
dimension with respect to blocks, as the following
inequality is also proved in~\cite{bib:MeMiTr17}.

\begin{theorem}\label{thm:Bdim-blocks}
If $d\ge1$ and $\bdim(B)\le d$ for every block $B$ of a poset $P$,
then $\bdim(P)=O(2^d)$.
\end{theorem}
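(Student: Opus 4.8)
The plan is to reduce the block case to the component case (Theorem~\ref{thm:Bdim-components}) by exploiting the tree-like structure of how blocks fit together in a connected poset. The key observation is that the blocks of a connected graph are organized along its \emph{block-cut tree}: the blocks and cut vertices form a bipartite tree, with each cut vertex adjacent to the blocks containing it. I would first treat the connected case, since the reduction to components via Theorem~\ref{thm:Bdim-components} is already available and contributes only a constant factor of the form $O(2^{\bdim})$ on top of whatever bound we obtain for connected posets.

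**First I would** fix a connected poset $P$ all of whose blocks have Boolean dimension at most $d$, and root the block-cut tree at some block $B_0$. For each cut vertex $c$, there is a unique block $B$ containing $c$ that is closest to the root; call the elements of $P$ lying in blocks strictly below $c$ (together with $c$ itself) the \emph{subtree hanging from $c$}. The structure we want to exploit is that comparabilities and incomparabilities in $P$ between points in different blocks are forced through the cut vertices: if $x$ lies in one branch and $y$ in another, then $x<y$ in $P$ if and only if there is a cut vertex $c$ on the tree-path between their blocks with $x<c<y$ (or $x=c$, or $c=y$) in $P$. This means the global order relation is essentially reconstructible from the within-block relations plus the positions of the cut vertices, which is exactly the kind of data a Boolean realizer is designed to encode compactly.

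**The main construction** would be to build a Boolean realizer for $P$ by taking, for each block $B$, a Boolean realizer $(\cgB_B,\tau_B)$ of size $O(2^d)$ witnessing $\bdim(B)\le d$ — here I am already invoking that within a single block the bound is $O(2^d)$, not $d$, because the individual blocks, viewed as posets in their own right, may themselves be disconnected or composite once we restrict attention — and then \emph{merging} these block-level orders into a bounded number of global linear orders on all of $P$. The merging is where the block-cut tree does the work: one orders the blocks consistently with a fixed traversal of the tree, placing all points of a descendant subtree of $c$ on a prescribed side of $c$, so that a single global linear order simultaneously extends the corresponding order within every block while respecting all the cross-block comparabilities routed through cut vertices. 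A fixed, bounded palette of such global orders (depending only on $d$, not on the number of blocks) should suffice, precisely because the tree structure lets us reuse the same order on every block at a given ``color.''

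**The hard part will be** controlling how the Boolean test function $\tau$ behaves across blocks. Within a single block, a pair $(x,y)$ is adjudicated by that block's local $\tau_B$; but for a cross-block pair, the test must first \emph{detect} which cut vertex (if any) separates them and then compare each of $x,y$ to that cut vertex using the appropriate block's machinery. Encoding ``locate the separating cut vertex'' with only $O(2^d)$ bits, uniformly over an unbounded number of cut vertices, is the crux — naively one would spend bits proportional to the depth of the tree. I expect the resolution to mirror the component argument in~\cite{bib:MeMiTr17}: one shows that the relevant comparisons between $x$, $y$ and the chain of cut vertices above them can be read off from a constant number of the global orders (where, for instance, one order sorts every point by the position of its deepest ancestor cut vertex, and another records the within-block offset), so that the single Boolean function $\tau$ — defined once on bit strings of length $O(2^d)$ — correctly reconstructs $x<y$ in $P$ regardless of how deep in the tree the two points sit. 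Verifying that this finite amount of positional information genuinely determines every comparability, including the degenerate cases where $x$ or $y$ is itself a cut vertex or where the two points share a block, is the bookkeeping that must be carried out in full in the actual proof.
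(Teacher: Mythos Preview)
The paper does not contain a proof of Theorem~\ref{thm:Bdim-blocks}. It is quoted verbatim from M\'{e}sz\'{a}ros, Micek and Trotter~\cite{bib:MeMiTr17} and used only as background; the present paper's contribution concerns local dimension, not Boolean dimension. So there is no ``paper's own proof'' to compare your proposal against, and any assessment must be on the internal merits of your sketch alone.

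On that score, your outline is a plausible high-level plan (block--cut tree, route cross-block comparabilities through cut vertices, merge per-block realizers), but as written it is not a proof and contains a slip. You write that for each block $B$ you take a Boolean realizer ``of size $O(2^d)$ witnessing $\bdim(B)\le d$''; the hypothesis gives you realizers of size $d$, and the $O(2^d)$ is supposed to appear only in the \emph{conclusion}, as the price paid for stitching blocks together. More seriously, the heart of the matter --- encoding, with a single Boolean formula on $O(2^d)$ bits, the identity and position of the separating cut vertex for an arbitrary cross-block pair, uniformly over trees of unbounded depth --- is precisely what you flag as ``the hard part'' and then defer with ``I expect the resolution to mirror the component argument.'' That is the entire content of the theorem; without an explicit construction of the global linear orders and a verification that a fixed $\tau$ decides every pair correctly, the proposal remains a statement of intent rather than an argument.
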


Again, this inequality cannot be improved dramatically,
as it is shown in~\cite{bib:MeMiTr17}
that for large $d$, there is a poset $P$ with
$\bdim(P)=\Omega(2^d/d)$ and $\bdim(B)\le d$ for every
block $B$ of $P$.

Our construction will show that local dimension behaves
quite differently with respect to blocks.
We will prove:

\begin{theorem}\label{thm:ldim-blocks}
For every $d\ge 1$, there
is a poset $P$ such that $\ldim(P)=d$ while $\ldim(B)\le 3$
for every block $B$ of $P$.
\end{theorem}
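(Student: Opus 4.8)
The plan is to produce $P_d$ by gluing together, along cut vertices, many copies of small gadgets, each of which is a block of local dimension at most~$3$; in this way the block half of the statement is handed to us for free and all of the difficulty is transferred to the single equality $\ldim(P_d)=d$. Concretely, I would take as building blocks posets whose cover graphs are $2$-connected and whose local dimension is at most~$3$---standard examples $S_n$ and Kelly's posets $K_n$ are the natural candidates, since we have already recorded $\ldim(S_n)\le3$ and $\ldim(K_n)=3$---and assemble them in a tree-like (or stacked) pattern in which the copies meet only at cut vertices. By the definition of a block given above, the blocks of the resulting poset $P_d$ are exactly these copies, each a convex subposet isomorphic to one of the gadgets, so $\ldim(B)\le3$ for every block $B$. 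The construction will be tall (its height grows with $d$) but of bounded tree-width, which is also what is needed for the tree-width consequence advertised in the introduction.

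For the upper bound $\ldim(P_d)\le d$ I would write down a local realizer that follows the recursive shape of the construction. A bounded number of global linear extensions witness the comparabilities and the ``easy'' incomparable pairs, and each gadget contributes a few short partial linear extensions to reverse the incomparable pairs local to it. Since distinct gadgets overlap only at cut vertices, the sole elements whose multiplicity can grow are the cut vertices, and a count along the recursive structure shows that the multiplicity accumulated at the most heavily used cut vertex can be held to exactly~$d$.

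The lower bound $\ldim(P_d)\ge d$ is the heart of the matter and the only place where the gluing must be engineered with care. The plan is to build into $P_d$ a distinguished element $u$ together with a chain $x_1<x_2<\cdots<x_d$, and to design the gadgets so that, for each $k$, reversing the hard incomparable pair contributed by the $k$-th stage is possible only in a partial linear extension that places $u$ strictly between $x_{k-1}$ and $x_{k}$. Given a local realizer $\cgL$, each stage then demands a member of $\cgL$ that contains $u$ and seats it in the $k$-th gap of the chain; since a single linear order places $u$ in exactly one gap, these $d$ demands are pairwise unsatisfiable within one $\ple$, so they require $d$ distinct members of $\cgL$, each containing $u$. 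This forces $\mu(u,\cgL)\ge d$ and hence $\ldim(P_d)\ge d$.

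I expect the main obstacle to be exactly the reconciliation of the two competing requirements on each gadget: the attachment to the chain must be rich enough that reversing a single hard pair genuinely pins $u$ into one prescribed gap---forcing $x_{k-1}<u<x_{k}$ simultaneously in the same $\ple$---yet each gadget must remain sparse enough to be a block of local dimension at most~$3$, so that within it the incomparabilities involving $u$ can still be reversed with multiplicity at most three per element. Transmitting a strong positional constraint to $u$ while keeping each block locally cheap is the delicate design problem at the center of the argument; once suitable gadgets are in hand, the block bound is immediate, the upper bound is a routine recursive construction, and the planarity can be arranged by using Kelly-type gadgets in place of standard examples.
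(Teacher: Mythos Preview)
Your high-level plan---glue copies of Kelly-type gadgets along cut vertices so that every block is a subposet of some $K_n$ and hence has local dimension at most~$3$---is exactly the paper's construction $\Kelly(n,d)$, and the block bound is indeed free. The gap is entirely in your lower-bound strategy.

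The ``gap in a chain'' argument cannot work as stated. A $\ple$ that reverses an incomparable pair $(a,b)$ is a linear extension of \emph{some} subposet containing $a$ and $b$; nothing forces it to contain your distinguished element $u$, nor any $x_k$. So even if you arrange that every linear extension of the \emph{whole} poset reversing the $k$-th hard pair places $u$ between $x_{k-1}$ and $x_k$, a local realizer can simply reverse that pair on the two-element subposet $\{a,b\}$ and never mention $u$ at all. The only way to guarantee $u$ lies in the reversing $\ple$ is to make $u$ one endpoint of the pair; but if all the hard pairs share the endpoint $u$, then a single $\ple$ with $u$ on top (or bottom) reverses all of them at once. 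There is no gadget of bounded local dimension that can ``pin'' a foreign element into a $\ple$, and this is precisely why your acknowledged obstacle is not merely delicate but insurmountable along this line.

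The paper's route is genuinely different: it does \emph{not} designate a high-multiplicity element in advance. Instead it takes the branching factor $n$ Ramsey-large relative to $d$, passes to the height-$2$ subposet $\Core(n,d)$ of minimal and maximal core points, and runs an inductive argument driven by the Product Ramsey Theorem. At step $m$ one has already accumulated a family $\cgM$ of $m-1$ distinct $\ple$'s, each containing every minimal element in a large product box; a pigeonhole-plus-Ramsey step then extracts a new $\ple$ (disjoint from $\cgM$) and a smaller box on which the induction continues. After $d$ steps one has $d-1$ such $\ple$'s together with a copy of $S_d$ that still needs $d$ further reversals, forcing some minimal element to appear in at least $d$ members of $\cgL$. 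The essential feature you are missing is that the construction is wide (branching $n=\PRam(\dots)$), not just deep, and the high-multiplicity element is \emph{found} by Ramsey theory inside an arbitrary local realizer rather than built into $P$.
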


\subsection{Structural Graph Theory}

The first major result linking dimension with structural graph
theory is due to Joret, Micek, Milans, Trotter, Walczak and 
Wang~\cite{bib:JMMTWW16},
who showed that for each pair $(t,h)$ of positive integers, there is a
least positive integer $d(t,h)$ so that if $P$ is a poset of height
$h$ and the tree-width\footnote{We refer the reader to \cite{bib:D18} for the concepts of tree-width and path-width.} of the cover graph of $P$ is~$t$, then
$\dim(P)\le d(t,h)$. A poset of height~$1$ is an antichain and has
dimension at most~$2$, so it is of interest to study $d(t,h)$ only
when $h\ge2$.  Trotter and Moore~\cite{bib:TroMoo77} showed that
$d(1,h)=3$ for all $h\ge2$, and Joret, Micek, Trotter, Wang, and
Wiechert~\cite{bib:JMTWW16} showed that $d(2,h)\le 1276$ for all $h\ge2$.
Recently, Seweryn~\cite{bib:Sewe19} has given the following substantive 
improvement:\quad $d(2,h)\le 12$.
It is a relatively simple exercise to show that the posets in the Kelly 
construction~\cite{bib:Kell81} have cover graphs with path-width at most~$3$, 
so $d(t,h)$ goes to infinity with $h$ when $t\ge3$.
The best bounds to date in the general case are due to Joret, Micek, Ossona de 
Mendez and Wiechert~\cite{bib:JMOW16}:

\begin{equation}
2^{\Omega(h^{\lfloor (t-1)/2 \rfloor})}
 \leq d(t,h)
 \leq 4^{\binom{t+3h-3}{t}}.
\end{equation}

However, Felsner, M\'{e}sz\'{a}ros and Micek~\cite{bib:FeMeMi17} proved that
the Boolean dimension of a poset is bounded
in terms of the tree-width of its cover graph, independent of its
height.  Formally, here is their result.

\begin{theorem}\label{thm:bdim-tw}
For every $t\ge1$, there is a least positive integer $d(t)$ so that
if $P$ is a poset whose cover graph has tree-width~$t$, then
$\bdim(P)\le d(t)$.
\end{theorem}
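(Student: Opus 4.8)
The plan is to convert a width-$t$ tree decomposition of the cover graph into a bounded number of linear orders on the ground set whose pairwise comparisons already encode the order relation of $P$, and then take $\tau$ to be the corresponding Boolean evaluation. First I would fix a rooted tree decomposition $(T,\{B_x\}_{x\in V(T)})$ of the cover graph of $P$ with $|B_x|\le t+1$ for every node $x$. Because the nodes whose bags contain a fixed element induce a subtree of $T$, a greedy pass over $T$ yields a coloring $c\colon P\to\{1,\dots,t+1\}$ that is rainbow on every bag, so elements sharing a bag get distinct colors. For each element $u$ let $\mathrm{top}(u)$ be the node nearest the root whose bag contains $u$; this places the elements into the tree, and for a node $x$ the bag $B_x$ separates the elements appearing strictly inside the subtree $T_x$ from those appearing outside it.

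The structural engine of the argument is a separator property for the order relation: if $u<v$ in $P$ and a node $x$ is chosen so that $u$ lies inside $T_x$ while $v$ lies outside $T_x$, then any saturated increasing chain from $u$ to $v$ in the Hasse diagram must meet the separator $B_x$, so there is a \emph{witness} $w\in B_x$ with $u\le w\le v$. Applying this at an appropriate node that separates $u$ from $v$, I get that every comparability is certified by a single bag and, since that bag is rainbow, by at most one witness of each color. Thus the entire relation is governed by the bounded data ``which color-$i$ element of the governing bag does $u$ sit below, and which does $v$ sit above''.

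It remains to record this bounded data in pairwise comparisons. I would use one family of orders that linearizes $T$ (a pre-order traversal, refined inside each bag by color, together with its reverse and a post-order variant); from the relative position of $u$ and $v$ in these $O_t(1)$ orders the function $\tau$ can decide the tree relationship of $\mathrm{top}(u)$ and $\mathrm{top}(v)$ and hence identify the governing node $x$ and the colors present in $B_x$. Alongside these I would use, for each color $i$, a small fixed number of orders engineered in the spirit of the four orders Kelly's posets require: they sort the elements so that, relative to the color-$i$ occupant $w$ of the governing bag, everything with $u\le w$ is driven to one side of $v$ and everything with $w\le v$ to the other, making the single bit ``$u$ before $v$'' flip exactly when $u\le w\le v$. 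Then $\tau$ outputs $1$ precisely when the traversal bits select a governing bag and the color bits certify a witness there, with the finitely many cases $u\in B_x$ or $v\in B_x$ handled separately.

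The main obstacle is that $\tau$ sees only the bits comparing $u$ with $v$, never a direct comparison of either with a witness $w$; so the color-$i$ orders must be built so that the \emph{position of $u$ relative to $v$ alone} reveals the existence of a suitable $w$ in the \emph{particular} bag singled out by the traversal bits, and this must work uniformly for every choice of governing node on every root-path, using a number of orders independent of the size of $T$. Arranging the color orders so that, once the separator level is fixed, the relevant witness comparison lands in a predictable coordinate --- essentially interleaving the per-color orders along the tree traversal --- is the delicate part; I expect this alignment, rather than the separator property or the coloring, to absorb the bulk of the work and to determine the final bound $d(t)$.
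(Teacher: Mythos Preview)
The paper does not prove this theorem. Theorem~\ref{thm:bdim-tw} is quoted as a result of Felsner, M\'{e}sz\'{a}ros and Micek~\cite{bib:FeMeMi17} and is stated without proof; the present paper only uses it as background to contrast Boolean dimension with local dimension. There is therefore nothing in the paper to compare your proposal against.

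Regarding the proposal on its own merits: you have correctly isolated the structural ingredients that drive the argument---a rooted tree decomposition, a proper $(t{+}1)$-coloring of each bag, and the separator property that every saturated chain from $u$ to $v$ must pass through any bag that separates them, producing a witness $w$ with $u\le w\le v$. That much is sound. But you have also, accurately, flagged the real gap and not closed it. The function $\tau$ only receives the bits ``is $u$ before $v$ in $L_j$?''; it never sees a comparison of $u$ or $v$ with any third element. So the color-$i$ orders must be built so that the relative position of $u$ and $v$ \emph{alone} encodes whether the color-$i$ occupant of the \emph{specific} bag selected by the traversal bits lies between them in $P$, and this encoding must work uniformly for every node of $T$ with a number of orders depending only on $t$. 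Your phrase ``interleaving the per-color orders along the tree traversal'' gestures at the right idea but does not supply a construction; this is exactly the technical core of the cited result, and it requires a genuine inductive scheme over the tree rather than a single clever ordering. As written, your proposal is an outline with the decisive step left open.
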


Barrera-Cruz, Prag, Smith, Taylor and Trotter~\cite{bib:BPSTT17} proved that 
the local dimension of a poset is bounded
in terms of the path-width of its cover graph, independent of its
height.  Formally, here is their result.

\begin{theorem}\label{thm:ldim-pw}
For every $t\ge1$, there is a least positive integer $d'(t)$ so that
if $P$ is a poset whose cover graph has path-width~$t$, then
$\ldim(P)\le d'(t)$.
\end{theorem}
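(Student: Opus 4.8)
The plan is to build a local realizer for $P$ whose multiplicity $\mu(P,\cgL)$ depends only on $t$, by combining one global linear extension with a structured family of partial linear extensions read off from a path decomposition of the cover graph. First I would fix a path decomposition $X_1,\dots,X_m$ of the cover graph $G$ of $P$ with every bag of size at most $t+1$, and record for each element $u$ the interval $I(u)=[\ell(u),r(u)]$ of indices of bags containing it. For $1\le i<m$ the set $S_i=X_i\cap X_{i+1}$ is a separator of size at most $t$ whose deletion disconnects, in $G$, the elements \emph{left} of $S_i$ (those $u$ with $r(u)\le i$) from the elements \emph{right} of $S_i$ (those $u$ with $\ell(u)\ge i+1$). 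Since the intervals $I(u)$ form an interval representation in which at most $t+1$ are live at any position, a greedy sweep colors the elements with $t+1$ colors so that equally colored elements have disjoint intervals. A single linear extension $L_0$ of $P$ already satisfies condition~(1) of a local realizer and, for every incomparable pair, realizes one of its two orientations; so it remains only to produce, for each $(x,y)\in\Inc(P)$, a $\ple$ in which $x>y$. Equivalently, I must reverse every incomparable pair relative to $L_0$, using $\ple$'s in which each point lies in a number of members bounded by a function of $t$.

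The technical heart is the following factoring lemma: if $x$ is left of $S_i$, $y$ is right of $S_i$, and $x<y$ in $P$, then some $s\in S_i$ satisfies $x<s<y$. Indeed a saturated chain from $x$ up to $y$ is a monotone path in $G$ from a left vertex to a right vertex, so it must meet the separator $S_i$. Consequently, for a ``far'' pair $\{x,y\}$ with disjoint intervals ($x$ left of $y$), comparability is determined entirely by the up\nobreakdash-signature $U_i(x)=\{s\in S_i:x<s\}$ and the down\nobreakdash-signature $D_i(y)=\{s\in S_i:s<y\}$: one has $x<y$ if and only if $U_i(x)\cap D_i(y)\ne\emptyset$. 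This reduces the reversal of far pairs to the at most $2^t$ possible signatures. For a separator $S_i$ and a set $T\subseteq S_i$, I would form the $\ple$ placing every left element with up\nobreakdash-signature $T$ above every right element whose down\nobreakdash-signature is disjoint from $T$; since $U_i(x)\cap D_i(y)=T\cap D_i(y)=\emptyset$ for the included elements, no relation $x<y$ is inverted and the result is a legitimate partial extension, and every far incomparable pair $\{x,y\}$ is reversed in the member indexed by $T=U_i(x)$. Pairs sharing a bag are bichromatic in the interval coloring, and I would reverse them by an analogous but simpler family obtained by applying the same signature idea inside the at most $t+1$ elements of the bags they meet.

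The main obstacle is bounding the multiplicity. A single element can be incomparable to many others spread along the entire length of the decomposition, so creating one $\ple$ for each separator would place some points in $\Theta(m)$ members, a bound depending on $|P|$ rather than on $t$. The essential difficulty is that charging each far pair to a canonical separator, say $S_{r(x)}$ just after its left element dies, tames the left elements — each is then a left element for a single separator — but does \emph{not} tame the right elements, which can remain incomparable to left elements dying at many different times. Overcoming this is the core of the argument: one must reuse $\ple$'s across separators, indexing them by the $O(t)$ separator \emph{elements} and their signatures rather than by the $\Theta(m)$ separator \emph{positions}, and then show that each point meets only a function\nobreakdash-of\nobreakdash-$t$ number of members. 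I expect this reconciliation of the left- and right-side bookkeeping, which pull in opposite directions and where the bounded separator size $t$ must finally be exploited, to be the decisive and most delicate step. The resulting bound on $d'(t)$ will be exponential in $t$, which is all that the statement requires.
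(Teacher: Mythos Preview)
The paper does not contain a proof of this theorem. Theorem~\ref{thm:ldim-pw} is quoted from Barrera-Cruz, Prag, Smith, Taylor and Trotter~\cite{bib:BPSTT17} as background; the present paper neither proves it nor sketches an argument for it. So there is no ``paper's own proof'' against which to compare your proposal.

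That said, as an independent attempt your outline is headed in a plausible direction but stops short of a proof precisely at the point you yourself flag. The factoring lemma through a separator is correct, and encoding comparability of far pairs by the up- and down-signatures in $S_i$ is the right reduction. However, the passage where you propose to ``reuse $\ple$'s across separators, indexing them by the $O(t)$ separator elements and their signatures rather than by the $\Theta(m)$ separator positions'' is not an argument; it is a hope. Concretely: a single element $y$ can be a right element relative to many separators $S_i$, and its down-signature $D_i(y)$ may vary with $i$, so it is not clear which of the at most $2^t$ signature-indexed $\ple$'s it should be placed in, nor why the resulting order on each such $\ple$ is a partial linear extension once elements from different separators are merged (two right elements $y,y'$ that are comparable in $P$ might end up in the same merged $\ple$ in the wrong relative order unless you control this). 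You have correctly located the crux, but you have not supplied the mechanism that bounds the multiplicity on the right side while keeping each $\ple$ an honest extension; until that is written down, the proposal is an outline, not a proof.
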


However, it is also shown in~\cite{bib:BPSTT17} that the analogue of
Theorem~\ref{thm:bdim-tw} for local dimension is false:

\begin{theorem}\label{thm:ldim-tw}
For every $d\ge1$, there exists a poset $P$ with $\ldim(P)>d$ such
that the cover graph of $P$ has tree-width at most~$3$.
\end{theorem}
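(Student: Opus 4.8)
The plan is to reuse the very poset $P$ produced by our principal construction for Theorem~\ref{thm:ldim-blocks}: given a target $d$, that construction already yields a poset $P$ with $\ldim(P)=d+1>d$ in which every block $B$ satisfies $\ldim(B)\le 3$. Since the lower bound $\ldim(P)>d$ is exactly what the principal construction delivers, the only additional work needed for Theorem~\ref{thm:ldim-tw} is to bound the tree-width of the cover graph of $P$ by~$3$. Thus the proof reduces to a purely structural, graph-theoretic estimate that is decoupled from the dimension-theoretic heart of the argument.

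For the tree-width bound I would invoke the standard fact that the tree-width of any graph is the maximum of the tree-widths of its blocks, that is, its maximal $2$-connected subgraphs, with bridges contributing tree-width~$1$. In the terminology fixed in Section~1, a block of the poset $P$ is a convex subposet whose cover graph is a block of the cover graph of $P$, and the cover graph of a convex subposet is an induced subgraph of the cover graph of $P$. Consequently the blocks of the cover graph of $P$ are precisely the cover graphs of the blocks of $P$, so it suffices to show that the cover graph of each block $B$ has tree-width at most~$3$.

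This last point I would verify gadget by gadget, directly from the explicit blocks appearing in the construction. These blocks are the same small, Kelly-type pieces whose local dimension we already control; as recalled in Section~1, the cover graphs of the posets in the Kelly construction~\cite{bib:Kell81} have path-width at most~$3$, and every convex subposet inherits an induced, hence no wider, cover graph. Since tree-width never exceeds path-width, each block therefore has cover graph of tree-width at most~$3$. Combining this with the block reduction of the previous paragraph gives tree-width at most~$3$ for the cover graph of all of $P$, and taking this $P$ for each $d$ proves the theorem.

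The main obstacle lies not in either of these steps but in the lower bound $\ldim(P)>d$ supplied by the principal construction; once that is in hand, the passage to Theorem~\ref{thm:ldim-tw} is routine. The one genuine subtlety to check is that the amalgamation producing $P$ is truly tree-like, in the sense that distinct gadgets share only cut vertices and no $2$-connected subgraph of the cover graph spans more than one gadget. This is exactly the hypothesis under which the ``tree-width equals the maximum block tree-width'' reduction is valid, and verifying it amounts to confirming that the cut-vertex structure of the cover graph matches the intended block decomposition of $P$.
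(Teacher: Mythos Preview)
Your proposal is correct and matches the paper's own argument essentially line for line: the paper takes $P=\Kelly(n,d)$, observes that every block of its cover graph is (the cover graph of) a subposet of the Kelly poset $K_n$ and hence has path-width, and therefore tree-width, at most~$3$, and then invokes the standard fact that the tree-width of a connected graph equals the maximum tree-width of its blocks; the lower bound $\ldim(P)>d$ is supplied by the main theorem on $\ldim(\Kelly(n,d))$. Your identification of the one point worth checking---that the gluing in the construction is genuinely along cut vertices so that no $2$-connected piece spans two gadgets---is exactly the verification the paper leaves implicit.
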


Our construction provides an alternative proof of 
Theorem~\ref{thm:ldim-tw}.

\subsection{Bounded Boolean Dimension and Unbounded Local Dimension}

Trotter and Walczak~\cite{bib:TroWal17} proved that if $P$ is a poset and 
$\ldim(P)\le3$, then $\bdim(P)\le 8442$.
However, for each $d\ge4$, they also proved that there is a poset $P$ with 
$\ldim(P)=4$ and $\bdim(P)=d$.
They also showed that if $\bdim(P)=3$, then $\ldim(P)=\dim(P)=3$.
In~\cite{bib:BPSTT17}, it is shown that for each $d\ge1$, there is a poset $P$ 
with $\bdim(P)\le 4$ and $\ldim(P)>d$.
Our construction provides another instance of a family of posets where Boolean 
dimension is bounded and local dimension is not.

\section{Our Construction}

For the remainder of the paper, whenever we discuss a pair $(n,d)$, it will be 
assumed that $n$ and $d$ are integers with $n\ge2$ and $d\ge1$.
Also, we do not distinguish between an element $x$ and a sequence $(x)$ of 
length~$1$.

Fix an integer $n\ge2$.
Then for each $d\ge1$, we define a planar poset $\Kelly(n,d)$ via a recursive 
process.
The elements of $\Kelly(n,d)$ will be identified with sequences of length $d$ from the set 
$\{a_1, \ldots, a_n, b_1, \ldots, b_n, w_1, \ldots, w_{n-1}, z_1, \ldots, z_{n-1}\}$. 
As suggested by the notation, the poset $\Kelly(n,1)=K_{n+1}$, i.e., 
$\Kelly(n,1)$ is just the Kelly construction illustrated in 
Figure~\ref{fig:Kelly}.

Now suppose that we have defined the planar poset $\Kelly(n,d)$ for some \mbox{$d\ge1$}.
Suppose further that we have a planar drawing without crossings of the 
order diagram of $\Kelly(n,d)$ and that in this drawing, 
$(b_n)$ is the highest point. To form $\Kelly(n,d+1)$, we take the 
drawing of $\Kelly(n,1)$ illustrated in Figure~\ref{fig:Kelly} and make the following changes: 

For each $i=1,2,\dots,n$, we take a suitably small scaling of 
the drawing of $\Kelly(n,d)$ and identify the point $(b_n)$ in 
$\Kelly(n,d)$ with the point $a_i$ in $\Kelly(n,1)$.  We change the 
label of the points in the copy of $\Kelly(n,d)$ by prepending the 
symbol $a_i$ at the start of the sequence.

\begin{figure}[hbt]%
  \begin{center}%
  \tikzstyle{svertex}=[circle, line width=0.3mm, draw, fill=black, inner sep=0pt, minimum width=0.6mm]%
  \tikzstyle{vertex}=[circle, line width=0.3mm, draw, fill=white, inner sep=0pt, minimum width=2.2mm]%
  \tikzstyle{edge}=[draw,line width=0.3mm,-]%
  \tikzstyle{sedge}=[draw,line width=0.15mm,-]%
  
  \newcommand{\smallKellyMin}[4]{
  
  \def\nn{#1} 
  \def\ddist{#2}
  \pgfmathparse{\nn-1}
  \edef\mm{\pgfmathresult}
  \pgfmathparse{\nn-2}
  \edef\kk{\pgfmathresult}
  
  \pgfmathparse{\nn*\ddist/3+#3}
  \edef\xRel{\pgfmathresult}
  \pgfmathparse{-\nn*\ddist+\ddist/2+#4}
  \edef\yRel{\pgfmathresult}
  
  \foreach \ii in {1,...,\nn} {
   \coordinate (aa\ii) at ($(\ii*\ddist/3+\xRel,-\ii*\ddist+\ddist/2+\yRel)$);
   \coordinate (bb\ii) at ($(-\ii*\ddist/3+\xRel,\ii*\ddist-\ddist/2+\yRel)$); }
  \foreach \ii in {1,...,\mm} {
   \coordinate (ww\ii) at ($(\ii*\ddist+\xRel,\ii*\ddist/3+\yRel)$);
   \coordinate (zz\ii) at ($(-\ii*\ddist+\xRel,-\ii*\ddist/3+\yRel)$); }
  \foreach \i in {1,...,\mm} {
   \pgfmathparse{\i+1}
   \edef\j{\pgfmathresult}
   \path[sedge] (aa\i) -- (ww\i);
   \path[sedge] (ww\i) -- (bb\j);
   \path[sedge] (bb\i) -- (zz\i);
   \path[sedge] (zz\i) -- (aa\j); }
  \foreach \i in {1,...,\kk} {
   \pgfmathparse{\i+1}
   \edef\j{\pgfmathresult}
   \path[sedge] (ww\i) -- (ww\j);
   \path[sedge] (zz\i) -- (zz\j); }
  \foreach \i in {1,...,\nn} {
   \draw (aa\i) node[svertex] {};
   \draw (bb\i) node[svertex] {}; }
  \foreach \i in {1,...,\mm} {
   \draw (ww\i) node[svertex] {}; 
   \draw (zz\i) node[svertex] {}; }
  }

  \newcommand{\smallKellyMax}[4]{
  
  \def\nn{#1} 
  \def\ddist{#2}
  \pgfmathparse{\nn-1}
  \edef\mm{\pgfmathresult}
  \pgfmathparse{\nn-2}
  \edef\kk{\pgfmathresult}
  
  \pgfmathparse{-\nn*\ddist/3+#3}
  \edef\xRel{\pgfmathresult}
  \pgfmathparse{\nn*\ddist-\ddist/2+#4}
  \edef\yRel{\pgfmathresult}
  
  \foreach \ii in {1,...,\nn} {
   \coordinate (aa\ii) at ($(\ii*\ddist/3+\xRel,-\ii*\ddist+\ddist/2+\yRel)$);
   \coordinate (bb\ii) at ($(-\ii*\ddist/3+\xRel,\ii*\ddist-\ddist/2+\yRel)$); }
  \foreach \ii in {1,...,\mm} {
   \coordinate (ww\ii) at ($(\ii*\ddist+\xRel,\ii*\ddist/3+\yRel)$);
   \coordinate (zz\ii) at ($(-\ii*\ddist+\xRel,-\ii*\ddist/3+\yRel)$); }
  \foreach \i in {1,...,\mm} {
   \pgfmathparse{\i+1}
   \edef\j{\pgfmathresult}
   \path[sedge] (aa\i) -- (ww\i);
   \path[sedge] (ww\i) -- (bb\j);
   \path[sedge] (bb\i) -- (zz\i);
   \path[sedge] (zz\i) -- (aa\j); }
  \foreach \i in {1,...,\kk} {
   \pgfmathparse{\i+1}
   \edef\j{\pgfmathresult}
   \path[sedge] (ww\i) -- (ww\j);
   \path[sedge] (zz\i) -- (zz\j); }
  \foreach \i in {1,...,\nn} {
   \draw (aa\i) node[svertex] {};
   \draw (bb\i) node[svertex] {}; }
  \foreach \i in {1,...,\mm} {
   \draw (ww\i) node[svertex] {}; 
   \draw (zz\i) node[svertex] {}; }
  }
  
  \begin{tikzpicture}[scale=1]
  \def\n{4} 
  \def\dist{1.5}
  \pgfmathparse{\n-1}
  \edef\m{\pgfmathresult}
  \pgfmathparse{\n-2}
  \edef\k{\pgfmathresult}
  \foreach \i in {1,...,\n} {
   \coordinate (a\i) at ($(\i*\dist/3,-\i*\dist+\dist/2)$);
   \coordinate (b\i) at ($(-\i*\dist/3,\i*\dist-\dist/2)$);
  }
  \foreach \i in {1,...,\m} {
    \smallKellyMin{4}{0.15}{\i*\dist/3}{-\i*\dist+\dist/2}
    \coordinate (w\i) at ($(\i*\dist,\i*\dist/3)$);
   \coordinate (z\i) at ($(-\i*\dist,-\i*\dist/3)$); }
  \foreach \i in {1,...,\m} {
   \pgfmathparse{\i+1}
   \edef\j{\pgfmathresult}
   \path[edge] (a\i) -- (w\i);
   \path[edge] (w\i) -- (b\j);
   \path[edge] (b\i) -- (z\i);
   \path[edge] (z\i) -- (a\j); }
  \foreach \i in {1,...,\k} {
   \pgfmathparse{\i+1}
   \edef\j{\pgfmathresult}
   \path[edge] (w\i) -- (w\j);
   \path[edge] (z\i) -- (z\j); }
  \foreach \i in {1,...,\n} {
   \draw (b\i) node[vertex] {} node[above right=-0.16 and 0.04] {$b_{\i}$}; }
  \foreach \i in {1,...,\m} {
    \draw (a\i) node[] {} node[below left=-0.14 and 0.01] {$a_{\i}$};
    \draw (w\i) node[vertex] {} node[above=0.05] {$w_{\i}$}; 
   \draw (z\i) node[vertex] {} node[below left=0.05 and -0.17] {$z_{\i}$}; }
   \draw (a\n) node[vertex] {} node[below left=-0.14 and 0.01] {$a_{\n}$};
  \end{tikzpicture}%
  \end{center}%
  \caption{Our construction for $n=3$ and $d=2$}%
  \end{figure}%

With the obvious requirements regarding scaling in mind,
it is clear that posets in the family $\mathbb{K}=\{\Kelly(n,d):n\ge2,d\ge1\}$
are planar. We note that for each pair $(n,d)$, a block of the poset
$\Kelly(n,d)$ is isomorphic to a subposet of the Kelly construction $K_{n+1}$.
Accordingly, if $B$ is a block in $\Kelly(n,d)$, then
$\ldim(B)\le 3$.

We also note that posets in the family $\mathbb{K}$ have
tree-width at most~$3$. Here's why: It is easy to see that the cover graph of the poset $\Kelly(n,1)$ has path-width at most~$3$, so it has tree-width
at most~$3$. One of the basic properties of tree-width is that the
tree-width of a connected graph is just the maximum tree-width of its
blocks. Since the blocks of $\Kelly(n,d)$ are isomorphic to subposets
of the Kelly construction $K_{n+1}$, they have tree-width at most~$3$.
However, it should be noted that the path-width of cover graphs 
in the family $\mathbb{K}$ is \textit{not} bounded.

Let $A=\{a_1,a_2,\dots,a_n\}$ and $B=\{b_1,b_2,\dots,b_n\}$.
The elements of $\Kelly(n,d)$ will be called the \textit{core points} if they are identified with
sequences from $\big(\bigcup_{k=1}^dA^{k-1}\times B\big)\cup A^d$, and we will 
let $\Core(n,d)$ denote the subposet of $\Kelly(n,d)$ determined by the core 
points. Roughly speaking, the core points are those sequences that consist of 
$d$ elements of $A$, or less than $d$ elements of $A$ followed by exactly one 
element of $B$.
A subtle detail, that is important later on, is that we are not using 
$a_{n+1}$ and $b_{n+1}$ in the definition of the $\Core(n,d)$.
Note that $\Core(n,d)$ will always be a proper subposet of $\Kelly(n,d)$.
Also, note that $\Core(n,1)$ is just the standard example $S_n$.

For the arguments to follow, it is important to 
understand the structure of the subposet $\Core(n,d)$.
We state the following elementary proposition for the emphasis.

\begin{proposition}\label{pro:coreNew}
Let $u$ and $v$ be two distinct elements of $\Core(n,d)$. Then 
$u<v$ in $\Core(n,d)$ if and only if there is $k\in[d]$ such 
that $u=(a_{i_1},\dots,a_{i_{k-1}},a_{i_k},\dots,a_{i_d})\in 
A^d$ and $v=(a_{i_1},\dots,a_{i_{k-1}},b_j)\in A^{k-1}\times B$, 
where $a_{i_k}<b_j$ in $S_n$.
\end{proposition}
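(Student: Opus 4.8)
The plan is to induct on $d$, reading off the recursive construction at the level of core points. Unwound, this says $\Core(n,1)=S_n$, and for $d\ge1$ the poset $\Core(n,d+1)$ is built from a top copy of $S_n$ on $A\cup B$ by attaching, beneath each minimal point $a_i$, a copy $C_i$ of $\Core(n,d)$ whose highest point $(b_n)$ is identified with $a_i$ and all of whose labels are prefixed by $a_i$. Hence every core point of $\Core(n,d+1)$ is either a top point $b_j$ or a point of some $C_i$; the minimal elements are exactly the sequences in $A^{d+1}$ obtained by prefixing $a_i$ to a minimal element of $C_i$, while every other core point carries a trailing $B$-symbol and so lies in $A^{k-1}\times B$ for a unique $k\in[d+1]$. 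The base case $d=1$ is immediate, since then only $k=1$ is available and the stated condition $a_{i_1}<b_j$ in $S_n$ is the definition of $S_n$.

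For the inductive step I would isolate the fact that each $C_i$ meets the remainder of $\Core(n,d+1)$ only in the cut point $a_i$. This yields a reduction of the comparabilities into three cases. If $u$ and $v$ lie in the same copy $C_i$, I strip the common leading symbol $a_i$ and apply the induction hypothesis to $C_i\cong\Core(n,d)$, so that the index witnessing the comparison in $\Core(n,d+1)$ is one more than the one furnished by the hypothesis, while the branch condition $a_{i_k}<b_j$ is inherited verbatim. If $v=b_j$ is a top point, the cut point forces any ascent of $u$ to pass through $a_i$, so $u<b_j$ should reduce to $u\le a_i$ inside $C_i$ together with $a_i<b_j$ in $S_n$, the former being governed by the induction hypothesis applied to the distinguished point $(b_n)$ of $C_i$. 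Points in two distinct copies are incomparable, and the remaining degenerate checks—that $A^{d+1}$ consists of minimal elements and that no two $B$-labelled points are comparable—follow from $S_n$ and transitivity.

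The crux, and the step I would treat most carefully, is exactly this ascent through the cut points. Tracing a minimal element $u=(a_{i_1},\dots,a_{i_d})$ upward, at each intermediate copy it can leave only through that copy's highest point, which by the $S_n$ structure is possible only when the relevant coordinate avoids the blocking index $n$; thus whether $u$ can reach the level at which $v$ branches off is not automatic but is controlled by the coordinates of $u$ lying below that level. I would therefore verify on the small case $\Core(n,2)$ that the single branch condition $a_{i_k}<b_j$ records every comparability of the form $u<v$ and introduces no spurious ones, adjusting the bookkeeping of the intermediate coordinates if needed, before pushing the induction through in general.
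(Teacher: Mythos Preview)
The paper gives no proof of this proposition; it is asserted as an elementary observation about the recursive construction. Your inductive plan via the cut vertices $(a_i)$ is the natural way to unpack it, and the three-case breakdown is correct.

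The concern you flag in your last paragraph, however, is not mere bookkeeping: it exposes a genuine defect in the proposition as written. Take $d=2$, $u=(a_i,a_n)$, and $v=(b_j)$ with $i\ne j$. With $k=1$ the proposition asserts $u<v$. But the $i$-th copy of $K_n$ meets the rest of $\Kelly(n,2)$ only at the cut point $(a_i)=(a_i,b_n)$, and in $K_n$ one has $a_n\parallel b_n$; hence $(a_i,a_n)\parallel(a_i)$, so $(a_i,a_n)$ is incomparable with every core point outside its copy, in particular with $(b_j)$. In general the ``if'' direction needs the additional hypothesis $i_{k+1},\dots,i_d\ne n$, precisely so that the minimal element can climb through each intermediate cut point $(a_{i_1},\dots,a_{i_\ell})=(a_{i_1},\dots,a_{i_\ell},b_n)$ for $\ell=k,\dots,d-1$. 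The ``only if'' direction is unaffected. None of this damages the main theorem---one may simply take all the index sets $H_i\subseteq[n-1]$ at the outset---but you cannot prove the proposition exactly as stated, and your instinct to test it on $\Core(n,2)$ before pushing the induction through was exactly right.
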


For example, when $n=8$ and $d=6$, we have
\[
u=(a_2,a_6,a_7,a_5,a_5,a_4)<(a_2,a_6,b_3)=v
\]
in $\Kelly(8,6)$. Note that $a_7<b_3$ in $S_8$.

We also note that the subposet $\Core(n,d)$ has height~$2$.
Furthermore, the minimal elements of $\Core(n,d)$ are those 
sequences of length $d$ with all coordinates in $A$, while the 
maximal elements of $\Core(n,d)$ are those sequences of length 
at most $d$ with all coordinates except the last in $A$. 

\section{The Local Dimension of Posets in the Class \texorpdfstring{$\mathbb{K}$}{K}}

In this section, we will show that the local dimension of
posets in the class $\mathbb{K}$ is unbounded.  This shows:
(1)~local dimension is not bounded for the class of 
planar posets; (2)~the local
dimension of a poset is not bounded in terms of the maximum local 
dimension of its blocks; and (3)~the local dimension of a poset
cannot be bounded in terms of the tree-width of its cover graph,
independently of its height. Finally, we will have given another
example of a family of posets where Boolean dimension is 
bounded and local dimension is not.

The proof of our main theorem will
require a special case of a result which has become known as 
the ``Product Ramsey Theorem'', appearing in the classic 
text~\cite{bib:GrRoSp90} as Theorem~5 on page~113. However, we will 
use slightly different notation in discussing
this result.

Given a finite set $X$ and
an integer $k$ with $0\le k\le|X|$, we denote the
set of all $k$-element subsets of $X$ by $\binom{X}{k}$.
When $T_1,T_2,\dots,T_t$ are $k$-element subsets of
$X_1,X_2,\dots,X_t$, respectively, we refer to
the product $g=T_1\times T_2\times\dots\times T_t$ as
a \emph{$\bfk^t$-grid} in $X_1\times X_2\times\dots\times X_t$.

Here is a formal statement of the version of the Product
Ramsey Theorem we will use in our argument.

\begin{theorem}\label{thm:product-ramsey}
For every $4$-tuple $(r,t,k,m)$ of positive integers with $m\ge k$, there is 
a~least positive integer $n_0=\PRam(r,t,k,m)$ with $n_0\ge m$ such that if 
$|X_i|\ge n_0$ for every $i=1,2,\dots,t$, then whenever we have a~coloring 
$\phi$ which assigns to each $\bfk^t$-grid $g$ in $X_1 \times X_2 \times \dots 
\times X_t$ a color $\phi(g)$ from a~set~$R$ of $r$ colors, then there is 
a~color $\alpha \in R$ so that for each $j=1,2,\dots,t$, there is 
an~$m$-element subset $H_j\subseteq X_j$ such that $\phi(g) = \alpha$ for 
every $\bfk^t$-grid $g$ in $H_1\times H_2\times\dots\times H_t$.
\end{theorem}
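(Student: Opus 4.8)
The plan is to prove Theorem~\ref{thm:product-ramsey} by induction on the number $t$ of coordinates, using the ordinary Ramsey theorem for $k$-uniform hypergraphs (with arbitrarily many colors) both as the base case and as the engine driving the inductive step. For the base case $t=1$, a $\bfk^1$-grid in $X_1$ is simply a $k$-element subset of $X_1$, so the statement asks exactly for a monochromatic $m$-set under an $r$-coloring of $\binom{X_1}{k}$; thus I would set $\PRam(r,1,k,m)$ equal to the classical Ramsey number $\Ram(r,k,m)$, the least $N$ for which every $r$-coloring of the $k$-subsets of an $N$-set has a monochromatic $m$-subset. Since any monochromatic $m$-set has $m\ge k$, the requirement $n_0\ge k$ is automatic. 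I would also note at the outset that the desired property is upward closed in $n_0$ (given larger sets $X_i$ one simply restricts to subsets of the threshold size), so once existence of a valid $n_0$ is established the \emph{least} one exists by well-ordering.

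For the inductive step, assume the result for $t-1$ and set $q=\PRam(r,t-1,k,m)$. The key idea is an encoding trick that peels off the last coordinate. First I would fix $|X_i|=q$ for $i<t$ (restricting if necessary), so that the number of $\bfk^{t-1}$-grids in $X_1\times\dots\times X_{t-1}$ is the \emph{finite} value $G=\prod_{i=1}^{t-1}\binom{q}{k}$. Now blow up the last set: for each $k$-subset $S\in\binom{X_t}{k}$, the given coloring $\phi$ induces a coloring $c_S$ of the $\bfk^{t-1}$-grids defined by $c_S(g')=\phi(g'\times S)$, and $c_S$ is one of at most $r'=r^{G}$ possible colorings. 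I would then color $\binom{X_t}{k}$ by the ``meta-color'' $c_S$, giving an $r'$-coloring of the $k$-subsets of $X_t$. Applying the ordinary Ramsey theorem, provided $|X_t|\ge \Ram(r',k,m)$, yields an $m$-set $H_t\subseteq X_t$ on which the meta-color is constant: there is a single coloring $c$ of $\bfk^{t-1}$-grids with $\phi(g'\times S)=c(g')$ for \emph{every} $S\in\binom{H_t}{k}$. The point is that the $\phi$-color of $g'\times S$ no longer depends on the choice of $S$ inside $H_t$.

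Finally I would apply the induction hypothesis to the coloring $c$ on the (size-$q$) sets $X_1,\dots,X_{t-1}$, obtaining a color $\alpha$ and $m$-sets $H_1,\dots,H_{t-1}$ with $c(g')=\alpha$ for every $\bfk^{t-1}$-grid $g'$ in $H_1\times\dots\times H_{t-1}$. Combining, every $\bfk^t$-grid $g=g'\times S$ in $H_1\times\dots\times H_t$ satisfies $\phi(g)=c_S(g')=c(g')=\alpha$, which completes the induction with the recursive bound
\[
\PRam(r,t,k,m)=\max\bigl\{\,\PRam(r,t-1,k,m),\ \Ram(r',k,m)\,\bigr\}.
\]
The step I expect to be the main obstacle, or at least the one requiring the most care, is the order of quantifiers in the inductive step: one must first \emph{fix} the first $t-1$ ground sets at the bounded size $q$ so that the collection of $\bfk^{t-1}$-grids over them is finite, and only then enlarge $X_t$ enough to absorb the resulting $r'$ meta-colors. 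The number $r'$ grows like a tower in $t$, but all that matters is that it is finite, so the ordinary Ramsey theorem applies; once the meta-coloring collapses the dependence on $S$, the problem genuinely reduces to $t-1$ coordinates and the induction closes.
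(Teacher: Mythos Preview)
Your argument is correct. The induction on $t$, peeling off the last coordinate by encoding each $k$-subset $S\in\binom{X_t}{k}$ with the ``meta-color'' $c_S$ (the full pattern of $\phi$-colors it induces on the finitely many $\bfk^{t-1}$-grids over the first $t-1$ coordinates), then applying ordinary hypergraph Ramsey to $X_t$ and the inductive hypothesis to $X_1,\dots,X_{t-1}$, is exactly the standard proof. Your remark about fixing $|X_i|=q$ for $i<t$ before blowing up $X_t$ is the right bookkeeping, and the monotonicity/well-ordering comment cleanly handles the ``least $n_0$'' phrasing.

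As for comparison with the paper: there is nothing to compare. The paper does not prove Theorem~\ref{thm:product-ramsey}; it simply quotes it from Graham--Rothschild--Spencer~\cite{bib:GrRoSp90} (Theorem~5, p.~113) and then only ever invokes the special case $k=1$, where a $\mathbf{1}^t$-grid is just a point of $X_1\times\dots\times X_t$ and the result degenerates to an iterated pigeonhole. Your proof is in fact the textbook one, so you have supplied what the paper outsources. If you only cared about matching the paper's application, the $k=1$ case admits a one-line argument (iterate pigeonhole coordinate by coordinate, again using the meta-color trick but with no Ramsey needed), though your general proof of course subsumes it.
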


Actually, we will only use the case where $k=1$, and now the
theorem becomes a multi-dimensional version of the pigeon-hole
principle. Readers who would be interested in how this
theorem is applied to combinatorial problems on posets when
$k\ge2$ are encouraged to consult~\cite{bib:BPSTT17}, 
\cite{bib:FeFiTr99} and~\cite{bib:TrWaWa17}.

Now we are ready to state and prove our main theorem.

\begin{theorem}\label{thm:ldim-planar}
For every $d\ge1$, there exists a least positive integer $n_d$ with
$n_d\ge2$ so that if $n\ge n_d$, then $\ldim(\Kelly(n,d))\ge d$.
\end{theorem}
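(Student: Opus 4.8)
The plan is to prove the bound by working inside the core. Since $\Core(n,d)$ is a subposet of $\Kelly(n,d)$ and local dimension is monotone under taking subposets, it suffices to show $\ldim(\Core(n,d))\ge d$ once $n$ is large enough; the threshold $n_d$ will come from the $k=1$ case of Theorem~\ref{thm:product-ramsey}. For $d=1$ the bound is immediate because $\ldim\ge1$ for every nonempty poset, so assume $d\ge2$. Write $C=\Core(n,d)$, identify a minimal element with a sequence $\bar a=(a_{i_1},\dots,a_{i_d})\in A^d$, and for each level $k\in[d]$ set $v_k(\bar a)=(a_{i_1},\dots,a_{i_{k-1}},b_{i_k})$. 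By Proposition~\ref{pro:coreNew} we have $\bar a\not<v_k(\bar a)$ (the $k$-th coordinate of $\bar a$ is exactly $i_k$), so $(\bar a,v_k(\bar a))\in\Inc(C)$; hence in any local realizer $\cgL$ condition~(2) forces some $L\in\cgL$ with $v_k(\bar a)<\bar a$ in $L$, and such an $L$ necessarily contains both $\bar a$ and $v_k(\bar a)$. I will call this \emph{reversing the level-$k$ pair of $\bar a$}.

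The combinatorial heart is a single-extension lemma, namely the classical lower bound for $\dim(S_n)$ localized to one level. Fix a prefix $q=(a_{i_1},\dots,a_{i_{k'-1}})$ and two minimal elements $\bar a,\bar a'$ extending $q$ whose $k'$-th coordinates are $a_s\ne a_{s'}$. Then $v_{k'}(\bar a)=(q,b_s)$ and $v_{k'}(\bar a')=(q,b_{s'})$, and by Proposition~\ref{pro:coreNew} we also have $\bar a<(q,b_{s'})$ and $\bar a'<(q,b_s)$ in $C$. Consequently no single $\ple$ can reverse both level-$k'$ pairs: if one $L$ had $(q,b_s)<\bar a$ and $(q,b_{s'})<\bar a'$, then, since $L$ must respect $\bar a<(q,b_{s'})$ and $\bar a'<(q,b_s)$, we would obtain in $L$ the cycle $(q,b_s)<\bar a<(q,b_{s'})<\bar a'<(q,b_s)$, which is impossible.

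Now suppose for contradiction that $\ldim(C)=t\le d-1$, witnessed by a local realizer $\cgL$ with $\mu(C,\cgL)=t$. For each minimal $\bar a$ choose a reversing $\ple$ $\lambda(\bar a,k)$ for every level $k$. Each $\bar a$ lies in at most $t\le d-1$ members of $\cgL$, so the $d$ extensions $\lambda(\bar a,1),\dots,\lambda(\bar a,d)$ cannot be distinct; color $\bar a$ by the lexicographically least pair $(k,k')$ with $k<k'$ and $\lambda(\bar a,k)=\lambda(\bar a,k')$. This colors the points of $A^d\cong[n]^d$ (equivalently, the $\mathbf{1}^d$-grids in $[n]^d$) with at most $\binom{d}{2}$ colors, so by the $k=1$ case of Theorem~\ref{thm:product-ramsey} there is a threshold $n_d=\PRam\big(\binom{d}{2},d,1,d\big)$ such that for $n\ge n_d$ we obtain $d$-element sets $H_1,\dots,H_d\subseteq[n]$ and a fixed pair $(k,k')$ so that every minimal element of $H_1\times\cdots\times H_d$ has color $(k,k')$.

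To close, fix coordinates $c_i\in H_i$ for all $i\ne k'$ and let $\bar a^{(s)}$ be the minimal element of the grid whose $k'$-th coordinate is $s\in H_{k'}$. Because $k<k'$, the element $v_k\big(\bar a^{(s)}\big)=(c_1,\dots,c_{k-1},b_{c_k})$ does not depend on $s$; call it $v$. For each $s$, the common extension $L_s:=\lambda(\bar a^{(s)},k)=\lambda(\bar a^{(s)},k')$ contains $v$ (it reverses the level-$k$ pair) and reverses the level-$k'$ pair of $\bar a^{(s)}$. By the single-extension lemma applied at level $k'$ with prefix $q=(c_1,\dots,c_{k'-1})$, the extensions $L_s$ are pairwise distinct as $s$ ranges over $H_{k'}$. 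Hence $v$ lies in at least $|H_{k'}|=d$ members of $\cgL$, forcing $\mu(C,\cgL)\ge d>d-1=t$, a contradiction. Therefore $\ldim(\Kelly(n,d))\ge\ldim(C)\ge d$ for all $n\ge n_d$. The step I expect to be the crux is this last one: the collision coloring by itself yields no contradiction, and the key insight is that sharing a single extension across the colliding pair $(k,k')$ pins the one $s$-independent lower cover $v$ into the $d$ pairwise-distinct extensions $L_s$, thereby overloading $\mu$ at $v$.
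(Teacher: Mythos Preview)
Your proof is correct and takes a genuinely different route from the paper's. The paper proceeds via a $d$-step recursion (Claim~\ref{clm:ind}): at step $m$ it has fixed indices $h_1,\dots,h_{m-1}$ and an $(m-1)$-element family $\cgM\subseteq\cgL$ of $\ple$'s, each containing every minimal element of a large residual grid; it then partitions the remaining coordinate sets into $m$ blocks, uses the observation that $\ple$'s reversing incomparable pairs from different blocks are disjoint to locate a block whose reversing $\ple$'s miss $\cgM$, adjoins one such $\ple$ to $\cgM$, and applies Product Ramsey again to homogenize. This costs $d-1$ nested Ramsey applications and a rapidly growing sequence $p_1<\cdots<p_d$ with $n_d=p_d$; at the end some minimal element lies in all of $\cgM$ plus one further $\ple$.

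You collapse this recursion into a single global coloring: pigeonhole on the $d$ chosen $\ple$'s $\lambda(\bar a,1),\dots,\lambda(\bar a,d)$ assigns each minimal $\bar a$ a collision pair $(k,k')$, and one call to Theorem~\ref{thm:product-ramsey} with $r=\binom{d}{2}$, $t=d$, $k=1$, $m=d$ yields a monochromatic grid. The decisive point---that for $k<k'$ the level-$k$ witness $v=v_k(\bar a^{(s)})$ is independent of the $k'$-th coordinate while your $S_n$-style cycle argument at level $k'$ forces the $L_s$ to be pairwise distinct---then overloads the maximal element $v$. Your argument is shorter and more direct, needing only one Ramsey step; the paper's is more explicitly constructive, exhibiting the family $\cgM$ level by level, and it locates the overload at a \emph{minimal} element rather than a maximal one.
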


\begin{proof}
The theorem holds trivially for $d\le 2$, with $n_1=n_2=2$. So for the
balance of the argument, we fix a value of $d$ with $d\ge 3$. 
Since $\Core(n,d)$ is a subposet of $\Kelly(n,d)$, it suffices
to show that $\ldim(\Core(n,d))\ge d$, provided $n$ is sufficiently
large. The argument will proceed by contradiction, i.e., we will
assume that $\cgL$ is a local realizer for $\Core(n,d)$ with
$\mu(z,\cgL)\le d-1$ for every $z\in \Core(n,d)$. We will then show
that this leads to a contradiction, provided $n$ is sufficiently large.

We will now describe a recursive procedure which consists of $d$ steps.
The procedure will utilize a rapidly growing sequence $(p_1,p_2,\dots,p_d)$ of integers with $p_1=d$.
For the moment, we defer the explanation as to how  this sequence is determined, but in time it will be clear that this is done by repeated applications of Theorem~\ref{thm:product-ramsey}. 

The key idea of the proof is presented in the following technical claim for 
which we need to define $A(H)=\{a_h\in A:h\in H\}$ for each $H\subseteq[n]$ 
and $\Min(X)$ as the set of all minimal elements in $X$.

\begin{claim}\label{clm:ind}
Let $m\in[d]$ and $p\in\mathbb{N}$. 
Then there exists $n\in\mathbb{N}$ such that the following statements hold:
\begin{enumerate}
\item There are $m-1$ indices $h_1,\ldots,h_{m-1}\in [n]$.
\item There exist $d+1-m$ sets $H_m,\dots,H_d \subseteq [n]$, with $|H_m|=\dots=|H_d|=p$.
\item If $\mathcal{L}$ is a local realizer for $\Core(n,d)$, then there exists an $(m-1)$-element family $\cgM \subseteq\cgL$  of $\ple$'s such that each element in
$$
\{a_{h_1}\}\times\dots\times\{a_{h_{m-1}}\}\times A(H_m) 
\times \dots \times A(H_d) \subseteq \Min(\Core(n,d))
$$
appears in every $\ple$ from $\cgM$.
\end{enumerate}

\end{claim}

It is worth mentioning that for each $i\in[m]$ the value $p_i$ equals $p$ from 
the above claim for $m=d+1-i$ and can be calculated by analyzing the~sequence 
of the calls of the claim in reverse order $m = d, \ldots, d+1-i$.

\begin{proof}
We prove the claim by induction on $m\in[d]$.
For $m=1$, we must be concerned only about item~(2) in this list by 
setting $n=p$ and $H_i=[n]$ for each $i=1,\dots,d$.

Here's how the inductive step is carried out.
We will prove the statement of the claim for $m+1$ and any $p'\in\mathbb{N}$.
Let us assume that it is true for $m$ and for some fixed $p\in\mathbb{N}$ which depends on $p'$. 
(How big $p$ must be will be described later.)
In consequence we obtain an integer $n$, a set $\cgM$ of ple's, appropriate $h_1,\ldots,h_{m-1}\in[n]$, and $H_m,\ldots,H_d\subseteq[n]$.
For each $i=m,\dots,d$, let $H_i=H_{i,1}\cup H_{i,2}\cup\dots\cup H_{i,m}$ be 
a partition into disjoint subsets with $|H_{i,j}|\ge \lfloor p/m \rfloor$ for 
all $j=1,2,\dots,m$.
For each $j\in[m]$, we let
\begin{align*}
U_j&=\{a_{h_1}\}\times\dots\times\{a_{h_{m-1}}\}\times A(H_{m,j}) \times 
 \dots \times A(H_{d,j}),\\
V_j&=\{a_{h_1}\}\times\dots\times\{a_{h_{m-1}}\}\times B(H_{m,j}),
\end{align*}
where $B(H)=\{b_h\in B:h\in H\}$ for each $H\subseteq[n]$.
Note that $U_j \subseteq \Min(\Core(n,d))$ and $V_j\subseteq \Max(\Core(n,d))$.
Also, for each $j=1,2,\dots,m$, we let $\cgL_j$ denote the subfamily 
of $\cgL$ consisting of those $\ple$'s which reverse at least one 
incomparable min-max pair in $U_j\times V_j$.
Note that each $\cgL_j$ is non-empty whenever $p/m\ge 1$. 
We observe that if $1\le j<j'\le m$, then $\cgL_j\cap\cgL_{j'}=\emptyset$.
This follows from the fact, that if $(u,v)$ and $(u',v')$ are 
incomparable min-max pairs from $U_j\times V_j$ and $U_{j'}\times V_{j'}$, 
respectively, then $u<v'$ and $u'<v$ in $\Core(n,d)$. 

Since $|\cgM|=m-1$, it follows that there is some integer $j_0\in[m]$ 
so that $\cgM\cap\cgL_{j_0}=\emptyset$.
Let $h_m$ be any integer in $H_{m,j_0}$ and let
\begin{align*}
W & = \{a_{h_1}\}\times\dots\times\{a_{h_{m-1}}\}\times\{a_{h_{m}}\}
 \times A(H_{m+1,j_0}) \times \dots \times A(H_{d,j_0}),\\
\{v\} & = \{a_{h_1}\}\times\dots\times\{a_{h_{m-1}}\}\times\{b_{h_{m}}\}.
\end{align*}
It is worth noting that $W\subseteq U_{j_0} \subseteq \Min(\Core(n,d))$ 
and $v\in V_{j_0}  \subseteq \Max(\Core(n,d))$.
Moreover we observe that $v\parallel w$ in $\Core(n,d)$ for all $w\in W.$
It follows that for each $w\in W$, there is some $\ple$ $L$ in 
$\cgL_{j_0}\subseteq\cgL-\cgM$ with $w>v$ in $L$.
Since there are at most $d-1$ $\ple$'s in $\cgL$ in which 
$v$ appears, this results in a coloring of the elements of $W$ using 
at most $d-1$ colors.
Since there is a natural one-to-one correspondence between elements 
of $W$ and $\mathbf{1}^{d-m}$ grids in $H_{m+1,j_0}\times\dots\times 
H_{d,j_0}$, we are then in a position to apply Theorem~\ref{thm:product-ramsey}.

In particular, given a value of $p'$, we will assume that $p/m$ 
is large enough to guarantee that there is a $\ple$ $L\in\cgL_{j_0}
\subseteq\cgL-\cgM$ and a family $K_{m+1},\dots,K_d$ of sets with 
$K_i\subseteq H_{i,j_0}$ and $|K_i|=p'$ for each 
$i=m+1,m+2,\dots,d$, such that $w>v$ in $L$ for all
$$
w \in \{a_{h_1}\}\times\dots\times\{a_{h_{m}}\}\times A(K_{m+1}) 
 \times \dots \times A(K_{d}) \subseteq W.
$$
We then add the $\ple$ $L$ to $\cgM$ and set 
$H_i=K_i$ for all $i=m+1,m+2,\dots,d$.
\end{proof}

The final contradiction occurs as we invoke Claim~\ref{clm:ind} for $m=d$ and $p=d$.
Now we have integers $h_1,\ldots,h_{d-1}$, a set $H_d$ of size $d$, and a subfamily $\cgM$ of $\cgL$ with $|\cgM|=d-1$.
It follows that there are exactly $d$ incomparable min-max pairs of the form $(u,v)$ with 
\begin{align*}
u&\in\{a_{h_1}\}\times\{a_{h_2}\}\times\dots\times\{a_{h_{d-1}}\}\times A(H_d) 
 \quad\text{and}\\
v&\in\{a_{h_1}\}\times\{a_{h_2}\}\times\dots\times\{a_{h_{d-1}}\}\times B(H_d).
\end{align*}

Furthermore, these incomparable min-max pairs form a subposet of $\Core(n,d)$ which is isomorphic to the standard example $S_d$.
Because one ple can reverse at most one of the $d$ incomparable pairs, there are (at least) $d$ distinct $\ple$'s in $\cgL$ 
which reverse these pairs and at least one of these does not belong to $\cgM$.
The $d-1$ ple's from $\cgM$ with the one outside we just got are witnesses for that, there is some $u$ with $\mu(u,\cgL)\ge d$.
The contradiction completes the proof.
\end{proof}

\end{document}